\newtheorem{Prop}{Proposition}
\newtheorem{corollary}[Prop]{Corollary}
\newtheorem{Th}[Prop]{Theorem}
\title{On the computational complexity of uncapacitated multi-plant lot-sizing problems}
\author{Jesus O. Cunha {\thanks{Universidade Federal do Ceará, Departamento de Estatística e Matemática Aplicada, Fortaleza, Brazil.  ({\tt jesus.ossian@dema.ufc.br})}} \and Hugo H. Kramer {\thanks{Universidade Federal da Paraíba, Departamento de Engenharia de Produ{\c c}{\~a}o, João Pessoa, Brazil.  ({\tt hkramer@ct.ufpb.br})}} \and Rafael A. Melo {\thanks{Universidade Federal da Bahia, Departamento de Ci\^{e}ncia da Computa\c{c}\~{a}o, Computational Intelligence and Optimization Research Lab (CInO), Salvador, Brazil. ({\tt melo@dcc.ufba.br})}} 
}
\begin{document}

\maketitle

\begin{abstract}

Production and inventory planning have become crucial and challenging in nowadays competitive industrial and commercial sectors, especially when multiple plants or warehouses are involved.
In this context, this paper addresses the complexity of uncapacitated multi-plant lot-sizing problems. We consider a multi-item uncapacitated  multi-plant lot-sizing problem with fixed transfer costs and show that two of its very restricted special cases are already NP-hard. 
Namely, we show that the single-item uncapacitated multi-plant lot-sizing problem with a single period and the multi-item uncapacitated two-plant lot-sizing problem with fixed transfer costs are NP-hard. Furthermore, as a direct implication of the proven results, we also show that a two-echelon multi-item lot-sizing with joint setup costs on transportation is NP-hard.\newline 

\noindent {\bf Keywords:} multi-plant lot-sizing; production planning; computational complexity; NP-hard. 

\end{abstract}

\section{Introduction}

Multi-plant problems are of very practical interest as they often arise in the structure of modern supply chains. The multi-item uncapacitated  multi-plant lot-sizing problem with fixed transfer costs consists in determining a production, storage and transfer planning to meet time varying deterministic demand requirements of multiple items for each of the plants on a multi-period planning horizon while minimizing the total cost. 
We believe this problem attracts interest from both theoretical and practical perspectives due to its simple structure and fundamental characteristics.
In this paper we analyze the complexity of the multi-item uncapacitated  multi-plant lot-sizing problem with fixed transfer costs and show that two of its very restricted special cases are already NP-hard.

Certain simple well-known uncapacitated production planning problems were shown to be polynomially solvable, such as the uncapacitated economic lot-sizing problem~\cite{WagWhi58} and some of its multi-level variants~\cite{Zan69,MelWol10}. The economic lot-sizing problem, however, becomes NP-hard when time dependent capacities exist~\cite{BitYan82}.
On the other hand, other production planning problems are already NP-hard even in their uncapacitated variants, such as the joint-replenishment problem and the one-warehouse multi-retailer problem~\cite{ArkJonRou89, CunMel16}.
Furthermore, other NP-hard uncapacitated production planning problems but with additional restrictions which imply that one cannot really produce an unlimited amount include the uncapacitated economic lot-sizing with remanufacturing~\cite{HelJanHeuWag14,CunMel16b} and the uncapacitated lot-sizing with inventory bounds \cite{AkbPenRap15,MelRib17}. The reader is referred to \citeA{BraAbsDauNor17,PocWol06} for reviews on production planning problems.

Several works considered multi-plant lot-sizing problems with inter-plant transfers.
\citeA{Sam95} studied uncapacitated and capacitated lot-sizing problems in a multi-plant, multi-item, multi-period environment in which inter-plant transfers are allowed. The authors have shown that both problems are NP-hard. We remark, however, that the author's work is not readily available for an interested reader. \citeA{SamSch02} considered the capacitated problem studied in \citeA{Sam95} and proposed a heuristic procedure which starts with a solution for the uncapacitated variant and uses a smoothing procedure to remove capacity violations. \citeA{NasResTol10} proposed a greedy randomized adaptive search procedure (GRASP) metaheuristic with path-relinking to find cost-effective solutions for the same problem. 
\citeA{CarNas16} considered the same problem and proposed a Lagrangian heuristic which was able to outperform both a standard formulation implemented in a commercial solver and the GRASP metaheuristic of \citeA{NasResTol10}.
\citeA{NasYanCar18} compared several mixed integer programming formulations for the very same problem. 
\citeA{CarNas18} proposed a kernel search matheuristic for a multi-plant, multi-item, multi-period capacitated lot-sizing problem with inter-plant transfers and setup carry-over. 

\citeA{DarLarCoe16} studied a real case which consists of a multi-plant production planning and distribution problem for the simultaneous optimization of production, inventory control, demand allocation and distribution decisions for delivery within promised time windows, and proposed integer programming formulations for the problem. 
Other more general production and distribution problems involving multiple plants in which the demands are associated to retailers were studied in~\citeA{Par05,MelWol12}.

The main contribution of this paper is to establish the complexity of certain uncapacitated multi-plant lot-sizing problems with inter-plant transfers which, to the best of our knowledge, are not publicly available in the literature. In this way, we show that the single-item uncapacitated multi-plant lot-sizing problem with a single period and the multi-item uncapacitated two-plant lot-sizing problem with fixed transfer costs are NP-hard. Besides, as a consequence of the proven results, a two-echelon multi-item lot-sizing with joint setup costs on transportation is also shown to be NP-hard.

The remainder of this paper is organized as follows. Section~\ref{sec:STDformulation} formally defines the multi-item uncapacitated multi-plant lot-sizing problem with fixed transfer costs. In the later sections, we show that two of its special cases are already NP-hard. Section~\ref{sec:singleitem} shows that the single-item uncapacitated multi-plant lot-sizing problem with a single period is NP-hard. Section~\ref{sec:twoplant} demonstrates that the multi-item uncapacitated two-plant lot-sizing with fixed transfer costs is NP-hard, which leads to showing that the two-echelon multi-item lot-sizing with joint setup costs
on transportation is NP-hard. Concluding remarks are presented in Section~\ref{sec:finalcomments}.

\section{The multi-item uncapacitated multi-plant lot-sizing problem with fixed transfer costs} \label{sec:STDformulation}

The multi-item uncapacitated multi-plant lot-sizing problem with fixed transfer costs (MIUMPLS) can be formally defined as follows.
Define $I = \{1,\dots,NI\}$ to be a set of items, $T = \{1,\dots,NT\}$ to be a set of time periods, and $P = \{1,\dots,NP\}$ to be a set of production plants. 
Let $d_{t}^{ip}$ be a dynamic deterministic demand for each item $i \in I$ in each plant $p \in P$ that must be met at time period $t \in T$, considering that no backorders are allowed. 
Consider fixed setup costs and unitary production costs which are incurred to produce an item $i \in I$ in plant $p \in P$ at time period $t\in T$ ($f_t^{ip}$ and $c_t^{ip}$, respectively). The unitary inventory holding cost of an item $i \in I$ in plant $p \in P$ at time period $t \in T$ is given by $h_{t}^{ip}$. Unitary transfer cost of an item $i \in I$ from plant $p \in P$ to plant $l \in P$, with $p \neq l$, at time period $t\in T$ is given by $r^{ipl}_t$. Additionally, a transfer fixed cost $F^{pl}_t$ is incurred whenever items are transferred from plant $p \in P$ to plant $l \in P$, with $p \neq l$, at time period $t$. The problem thus consists in determining a production/transfer plan aiming to minimize the total sum of setup, production, transfer, and inventory costs. It is assumed that there is no initial storage.

In order to formally describe the problem as a mixed integer program, define variables $x_{t}^{ip}$ to be the amount produced of item $i \in I$ in plant $p \in P$ at time period $t \in T$. Define variables $s_{t}^{ip}$ to be the amount of item $i \in I$ held in storage in plant $p \in P$ at the end of time period $t \in T$. Let variable $w_{t}^{ip l}$ define the amount of item $i \in I$ transferred from plant $p \in P$ to plant $l \in P$, with $p \neq l$, at time period $t \in T$. {Also, it is assumed that such transfer starts and ends at time period $t \in T$.} Consider the binary variable $y_{t}^{ip}$ defining whether the production of item $i \in I$ occurs in plant $p \in P$ at time period $t \in T$. Lastly, define the binary variable $Y^{pl}_t$ to be equal to one if there is transfer from plant $p \in P$ to plant $l \in P$, with $p \neq l$, at time period $t \in T$ and zero otherwise. Furthermore, let $M$ be a sufficiently large number. Using these
defined variables, the problem can be cast as the following mixed integer program
\begin{align}
z_{MIUMPLS}=
\min &  \sum_{i \in I} \sum_{p \in P} \sum_{t \in T} \left( c_t^{ip} x_{t}^{ip} + f_t^{ip} y_{t}^{ip} + h_{t}^{ip} s_{t}^{ip} + \sum_{\substack{l \in P,\\l \neq p}} r_t^{ipl} w_{t}^{ipl} \right) + \sum_{p \in P}\sum_{\substack{l \in P,\\l \neq p}}\sum_{t \in T} F^{pl}_t Y^{pl}_t \label{STD:FO} \\
& s_{t-1}^{ip} + x_{t}^{ip} + \sum_{\substack{l \in P, \\l \neq p}} w_{t}^{ilp}  = s_{t}^{ip} + \sum_{\substack{l \in P, \\l \neq p}} w_{t}^{ipl} + d_{t}^{ip}, \ \textrm{ for} \ i \in I, \ p \in P, \ t \in T, \label{STD:invbal} \\
& x_{t}^{ip} \leq M y_{t}^{ip}, \ \textrm{ for}\ i \in I, \ p \in P, \ t \in T, \label{STD:disj} \\
& w_t^{ipl} \leq M Y_t^{pl},\ \textrm{ for}\ i \in I,\ p \in P,\ l \in P,\ t \in T,\ \mbox{with } p \neq l, \label{STD:cap} \\
& x_{t}^{ip}, s_{t}^{ip}  \geq 0,\ \textrm{ for}\ i \in I,\ p \in P,\ t \in T, \label{STD:xsdef} \\
& w_{t}^{ipl} \geq 0,\ \textrm{ for}\ i \in I,\ p \in P,\ l \in P,\ t \in T,\ \mbox{with } p \neq l, \label{STD:rdef} \\ 
& y_{t}^{ip} \in \{0, 1\},\ \textrm{ for}\ i \in I,\ p \in P,\ t \in T, \label{STD:ydef} \\
& Y_t^{pl} \in \{0, 1\},\ \textrm{ for}\ p \in P,\ l \in P,\ t \in T,\ \mbox{with } p \neq l. \label{STD:Ydef}
\end{align}
The objective function~(\ref{STD:FO}) minimizes the total cost, which is composed of setup, production, storage and transfer costs. Constraints~(\ref{STD:invbal}) are inventory balance constraints and ensure that the demands are met without backordering. Constraints~(\ref{STD:disj}) and (\ref{STD:cap}) enforce the setup variables to one whenever, respectively, production and transfer occur. Constraints~(\ref{STD:xsdef})-(\ref{STD:Ydef}) define the nonnegativity and integrality requirements on the variables.

The decision version of the multi-item uncapacitated multi-plant lot-sizing problem with fixed transfer costs asks whether there is a solution whose cost is at most $K$, where $K$ is given as input.

\section{NP-hardness of the single-item uncapacitated multi-plant lot-sizing problem with a single period}
\label{sec:singleitem}

In this section, we show that a special case of the multi-item uncapacitated multi-plant lot-sizing problem with fixed transfer costs (MIUMPLS), namely, the single-item uncapacitated multi-plant lot-sizing problem (UMPLS) with a single period, is NP-hard. This is achieved using a reduction from the decision version of the uncapacitated facility location problem (UFL).

The uncapacitated facility location problem can be formally defined as follows. Consider a set $S=\{1,\ldots,NS\}$ of potential facility locations, a set $C=\{1,\ldots,NC\}$ of clients, a fixed cost $q_j$ to open facility $j\in S$, and a cost $v_{lj}$ of serving client $l \in C$ from facility $j \in S$. The problem consists in obtaining a subset $S'\subseteq S$ of the facilities to be opened and then to assign clients to these facilities while minimizing the total cost, given as ${z}_{UFL} = \sum_{j \in S'}q_j + \sum_{j \in S'}\sum_{l \in C_j}v_{lj}$, where $C_j$ represents the set of clients served by facility $j$.




The decision version of the uncapacitated facility location asks whether there is a subset $S'\subseteq S$ that can be opened and the clients can be assigned to its facilities such that the total cost is at most $K''$, where $K''$ is given as input.

\begin{Th}\label{complexityUMLS}
The single-item uncapacitated multi-plant lot-sizing problem with a single period is NP-hard.
\end{Th}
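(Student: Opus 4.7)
The plan is to reduce from the decision version of the uncapacitated facility location problem (UFL), which is well known to be NP-hard. Given a UFL instance with facilities $S$, clients $C$, opening costs $q_j$, assignment costs $v_{lj}$, and bound $K''$, I would construct a single-period UMPLS instance with plant set $P = S \cup C$ (suitably relabeled), dropping the period and item indices since $|T| = |I| = 1$. Facility plants $j \in S$ have zero demand, production setup cost $f^j = q_j$, and null unit production cost; client plants $l \in C$ have demand $d^l = 1$ and a prohibitively large setup cost $M$ to rule out local production. The fixed transfer cost is set to $F^{jl} = v_{lj}$ with $r^{jl} = 0$ for every facility $j$ and client $l$, while all other directed transfer pairs (facility-to-facility, client-to-client, client-to-facility) receive the prohibitively large fixed cost $M$. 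Here $M$ is chosen strictly greater than $\sum_{j \in S} q_j + \sum_{j \in S, l \in C} v_{lj}$, which upper bounds any cost worth comparing against $K = K''$.

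Then I would establish that the constructed UMPLS instance has a feasible solution of cost at most $K''$ if and only if the UFL instance has a solution of cost at most $K''$. For the forward direction, given a UFL solution that opens $S' \subseteq S$ and serves each client $l$ from a facility $\sigma(l) \in S'$, the corresponding UMPLS solution produces at each plant $j \in S'$ exactly $|\sigma^{-1}(j)|$ units and transfers one unit from $j$ to each of its assigned clients, yielding exactly the same total cost. Conversely, any UMPLS solution of cost strictly less than $M$ cannot activate any of the prohibitively priced resources: no production occurs at client plants and no transfer exists other than from a facility to a client. Since $r = 0$, the cost of such a solution equals $\sum_{j \in S''} q_j + \sum_{(j,l) \in A} v_{lj}$, where $S''$ is the set of producing facility plants and $A$ is the set of activated facility-to-client transfer pairs; assigning each client $l$ to any facility $j$ with $(j,l) \in A$ then yields a UFL solution of no greater cost.

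The main obstacle lies in handling the converse direction cleanly, that is, in arguing that an optimal UMPLS solution indeed has the clean assignment structure of UFL. The key subtleties to address are (i) ruling out routing of production through intermediate plants, which is ensured by the large value $M$ on all non-facility-to-client transfer pairs; (ii) showing that splitting a client's demand across several facilities is never strictly better than concentrating it at one, which follows because the fixed transfer cost $F^{jl}$ is paid regardless of the transferred quantity while $r = 0$ makes extra quantity free; and (iii) ruling out production at client plants, which again follows from their setup cost being set to $M$. Since the construction is clearly polynomial in the size of the UFL input and UFL is NP-hard, this establishes that the single-item uncapacitated multi-plant lot-sizing problem with a single period is NP-hard.
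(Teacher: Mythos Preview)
Your reduction from UFL with facility plants and client plants is exactly the paper's approach, and your argument is correct. There is, however, one difference in the encoding that is worth pointing out. The paper places the assignment cost $v_{lj}$ on the \emph{per-unit} transfer cost, setting $r^{j,NS+l}=v_{lj}$ (with all fixed transfer costs implicitly zero); since each client has unit demand, shipping one unit costs exactly $v_{lj}$. You instead put $v_{lj}$ on the \emph{fixed} transfer cost $F^{jl}$ and set $r^{jl}=0$.

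Both encodings work, but they buy slightly different things. With the paper's choice the cost is linear in the transferred quantity, so the ``splitting'' issue you flag in point~(ii) simply does not arise: an arbitrary (even fractional) UMPLS solution maps directly to a UFL solution of the same cost without any rounding argument. More importantly, the paper's reduction never uses the fixed transfer costs at all, so it establishes NP-hardness of the more restricted variant in which $F\equiv 0$; this is consistent with the paper's solution tuple $(\hat y,\hat x,\hat s,\hat w)$, which omits the $Y$ variables. Your encoding, by contrast, relies on the fixed-cost feature of the general MIUMPLS model and requires the extra (easy) observation that concentrating a client's supply at a single facility can only decrease the number of activated arcs and hence the cost. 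So your proof is valid for the single-item single-period special case of MIUMPLS as stated, but the paper's version yields the slightly sharper statement that hardness already holds without fixed transfer costs.
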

\begin{proof}
In the following, a polynomial transformation UFL{$\propto$}UMPLS is presented.
Given an instance for the uncapacitated facility location, an instance for the single-item uncapacitated multi-plant lot-sizing will be constructed as follows.
Define a unique item and a single period, i.e., $NI = NT = 1$. Create a set $P=\{1, \ldots, NS, NS+1,\ldots,NS+NC \}$ of plants, corresponding to every facility in $S=\{ 1, \ldots, NS \}$ and every client in $C=\{ 1, \ldots, NC \}$.  
For every plant associated with a facility $j\in S$, set $d^{1j}_1 = 0$, $f^{1j}_1=q_j$ and $c^{1j}_1=0$.
For every plant associated with a client $l \in C$, set $d^{1,NS+l}_1 = 1$, and $f^{1,NS+l}_1 = c^{1,NS+l}_1 = \Omega$, where $\Omega$ represents a very large value which is polynomially bounded by the input size such as $(\max_{j\in S}q_j + \max_{l\in C, j\in S}v_{lj})\times (NC+1)$. 
For each pair $\{j,l\}$, such that $j \in S$ and $l\in C$, set $r^{1j,NS+l}_1 = v_{lj}$ and $r^{1,NS+l,j}_1 = {\Omega}$.
For each ordered pair $(j,j')$, such that $j,j' \in S$, with $j\neq j'$, set $r^{1jj'}_1 = {\Omega}$ and for each ordered pair $(l,l')$, such that $l,l'  \in C$, with $l\neq l'$, set $r^{1,NS+l,NS+l'}_1 = {\Omega}$.
We now show that the UFL instance has a solution with value at most $K''$ iff the UMPLS instance has a solution with value at most $K''$. Firstly, consider a solution $S'$ for the UFL instance, in which each facility $j\in S'$ covers a set of clients $C_j$, with cost $K''$ . Let the corresponding solution for UMPLS be $(\hat{y},\hat{x},\hat{s},\hat{w})$ in which $\hat{y}^{1j}_1 = 1$ and  $\hat{x}^{1j}_1 = \sum_{l \in C_j}d^{1l}_1$ for $j \in S'$, $\hat{w}^{1jl}_1 = 1$ for each ordered pair $(j,l)$ such that $j \in S'$ and $l \in C_j$, and all other variables are set to zero. 
Therefore, the cost of $(\hat{y},\hat{x},\hat{s},\hat{w})$ is $\hat{z}_{UMPLS} = \sum_{j \in S'} \left(f_1^{1j} \hat{y}_{1}^{1j} + \sum_{\substack{l \in C_j}} r_1^{1j,NS+l} \hat{w}_{1}^{1j,NS+l} \right) = \sum_{j \in S'} \left(q_j + \sum_{\substack{l \in C_j}} v_{lj} \right) = K''$.
On the other hand, let $(\hat{y},\hat{x},\hat{s},\hat{w})$ be an integral solution for the UMPLS instance with cost $K$. Note that for any solution in which there are nonintegral $w$ variables, there exists an integral solution at least as good as this nonintegral one. We construct a solution for UFL as follows. Define $S' = \{ j \in S \ | \ \hat{y}_{1}^{1j} = 1 \}$ and, for each $j\in S'$ build the set $C_j = \{l \in C \ | \ \hat{w}^{1j,NS+l}_1 = 1  \}$. The cost of such solution is thus $\hat{z}_{UFL} = \sum_{j \in S'}q_j + \sum_{j \in S'}\sum_{l \in C_j}v_{lj} = \sum_{j \in S'} \left(f_1^{1j} \hat{y}_{1}^{1j} + \sum_{\substack{l \in C_j}} r_1^{1j,NS+l} \hat{w}_{1}^{1j,NS+l} \right) = K$. Thus, the result holds.  
\end{proof}

\section{NP-hardness of the multi-item uncapacitated two-plant lot-sizing problem with fixed transfer costs}
\label{sec:twoplant}

In this section, we show that another special case of the multi-item uncapacitated multi-plant lot-sizing problem with fixed transfer costs is NP-hard. More specifically,
we prove that the decision version of the joint-replenishment problem (JRP) can be reduced to the decision version of the multi-item uncapacitated two-plant lot-sizing problem with fixed transfer costs (MIU2PLS), implying that the optimization version of the latter is NP-hard.

The joint-replenishment problem can be formally defined as follows. Consider a set $I=\{1,\ldots,NI\}$ of items, a set $T=\{1,\ldots,NT\}$ of periods, a demand $d'^i_{t}$ associated to every item $i\in I$ and period $t\in T$, a per item setup cost $f'^i_{t}$ for producing item  $i \in I$ in period $t\in T$, a joint setup cost $F'_t$ if any item is produced in period $t$, a per unit production cost $c'^i_{t}$ for $i\in I$ and $t\in T$, and inventory holding costs $h'^i_t$ for $i\in I$ and $t\in T$. The problem consists in determining a production plan minimizing the total sum of production and inventory costs.

In order to formally describe the problem as a mixed integer program, define variable $x'^{i}_{t}$ to be the amount of item $i\in I$ produced in period $t\in T$, $y'^{i}_{t}$ to be equal to one if production of item $i\in I$ occurs in period $t\in T$ and to be equal to zero otherwise, $Y'_t$ to be equal to one if production of any item occurs in period $t\in T$ and to be equal to zero otherwise, and $s'^{i}_{t}$ to be the amount of item $i\in I$ held in storage at time period $t\in T$. Additionally, let $M$ be a sufficiently large number. The joint-replenishment problem can be formulated as the mixed integer program

\begin{align}
z_{JRP} = \min & \sum_{i \in I} \sum_{t \in T} \left( c'^{i}_t x'^{i}_{t} + f'^{i}_t y'^{i}_{t} + h'^{i}_{t} s'^{i}_{t} \right) + \sum_{t \in T} F'_t Y'_t \label{JRP:FO} \\
& s'^{i}_{t-1} + x'^{i}_{t} = s'^{i}_{t} + d'^{i}_{t},\ \textrm{ for} \ i \in I, \ t \in T, \label{JRP:invbal} \\
& x'^{i}_{t} \leq M y'^{i}_{t},\ \textrm{ for}\ i \in I,\ t \in T, \label{JRP:disj} \\
& y'^{i}_{t} \leq Y'_t,\ \textrm{ for} \ i \in I,\ t \in T, \label{JRP:cap} \\
& x'^{i}_{t}, s'^{i}_{t} \geq 0,\ \textrm{ for} \ i \in I, \ t \in T, \label{JRP:xsdef} \\
& y'^{i}_{t} \in \{0, 1\},\ \textrm{ for} \ i \in I, \ t \in T, \label{JRP:ydef}\\
& Y'_t \in \{0, 1\},\ \textrm{ for} \ t \in T. \label{JRP:Ydef}
\end{align}
The objective function (\ref{JRP:FO}) minimizes the total production and inventory costs. Constraints~(\ref{JRP:invbal}) are balance constraints. Constraints~(\ref{JRP:disj}) enforce the per item setup variables to one whenever a given item is produced while constraints~(\ref{JRP:cap}) imply that the joint setup variables are set to one whenever any production occurs. Constraints~(\ref{JRP:xsdef})-(\ref{JRP:Ydef}) guarantee the nonnegativity and integrality requirements on the variables. 
The decision version of JRP consists in determining whether there is a solution with total cost at most $K'$.  

\newpage

\begin{Th}\label{th:miu2pnphard}
The multi-item uncapacitated two-plant lot-sizing problem with fixed transfer costs is NP-hard.
\end{Th}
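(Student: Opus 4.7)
The plan is to reduce the decision version of the joint-replenishment problem (JRP), known to be NP-hard, to the decision version of MIU2PLS. The guiding intuition is to use a two-plant instance in which plant~$1$ is the unique ``factory'' and plant~$2$ is the unique ``demand site,'' so that the fixed transfer cost $F^{12}_t$ on the single active transfer arc plays the role of the joint setup cost $F'_t$ of JRP at period~$t$.

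Given a JRP instance with threshold $K'$, I would build an MIU2PLS instance with the same item set $I$, the same period set $T$, and $NP=2$. The matching parameters would be $d_t^{i1}=0$, $d_t^{i2}=d'^{i}_{t}$, $f_t^{i1}=f'^{i}_{t}$, $c_t^{i1}=c'^{i}_{t}$, $h_t^{i2}=h'^{i}_{t}$, $F^{12}_t=F'_t$, and $r_t^{i12}=0$. To tightly couple production with transfer so that MIU2PLS gains no extra flexibility over JRP, I would set the remaining parameters $f_t^{i2}$, $h_t^{i1}$, $F^{21}_t$, and $r_t^{i21}$ all equal to a large polynomially bounded constant $\Omega$, for which the choice $\Omega=K'+1$ is valid. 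This makes production at plant~$2$, holding inventory at plant~$1$, and any reverse transfer from plant~$2$ to plant~$1$ prohibitively expensive.

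For the forward implication, from any JRP solution of cost at most $K'$ I would define the MIU2PLS variables by $x_t^{i1}=x'^{i}_{t}$, $w_t^{i12}=x'^{i}_{t}$, $s_t^{i2}=s'^{i}_{t}$, $y_t^{i1}=y'^{i}_{t}$, $Y^{12}_t=Y'_t$, with every other variable set to zero; feasibility with respect to constraints~(\ref{STD:invbal})--(\ref{STD:Ydef}) and equality of the objective values follow by direct substitution, as only the non-$\Omega$ cost terms are active. For the reverse implication, given an MIU2PLS solution of cost at most $K'<\Omega$, no prohibitive option can be paid for, so $x_t^{i2}=s_t^{i1}=w_t^{i21}=0$; the plant-$1$ inventory balance then forces $x_t^{i1}=w_t^{i12}$, and the JRP solution is read off as $x'^{i}_{t}:=x_t^{i1}$, $s'^{i}_{t}:=s_t^{i2}$, $y'^{i}_{t}:=y_t^{i1}$, $Y'_t:=Y^{12}_t$, with equal cost.

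The main technical subtlety lies in the ``no prohibitive option'' step, because a priori a tiny positive value of $s_t^{i1}$ could contribute less than $K'$ to the objective even when $h_t^{i1}=\Omega$. I would handle this exactly as in the proof of Theorem~\ref{complexityUMLS}, by restricting to solutions in which $x$, $s$, and $w$ are integer; this is legitimate because, once the binary variables $y$ and $Y$ are fixed, the residual linear program decomposes per item into a min-cost flow problem with integer data and hence admits an integer optimum at least as good as any fractional feasible point. The corollary that the two-echelon multi-item lot-sizing problem with joint setup costs on transportation is NP-hard then follows at once, since the MIU2PLS instance constructed above has precisely the two-echelon shape: plant~$1$ as the upper echelon, plant~$2$ as the lower echelon, with the joint setup cost sitting on the transportation link.
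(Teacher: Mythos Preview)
Your proposal is correct and follows essentially the same reduction as the paper: plant~$1$ as producer, plant~$2$ as demand site, $F^{12}_t$ carrying the joint setup cost, and all reverse or alternative options priced at a prohibitive $\Omega$ (the paper additionally sets $c_t^{i2}=\Omega$, which you omit, but this is harmless since $f_t^{i2}=\Omega$ already blocks production at plant~$2$). Your backward direction, which invokes the min-cost-flow integrality argument to rule out fractional use of $\Omega$-priced options, is in fact more explicit than the paper's, which simply takes a solution of ``finite'' objective value and reads off the JRP variables without spelling out this step.
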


\begin{proof}
In what follows, a polynomial transformation JRP{$\propto$}MIU2PLS is described. Given an instance for the joint-replenishment problem, we construct an instance for the multi-item uncapacitated two-plant lot-sizing problem with fixed transfer costs as follows. Each item $i\in I$ of JRP is associated to an item in MIU2PLSP.  Each time period $t\in T$ of JRP is linked to a time period of MIU2PLSP.  
For plant $p=1$, set $d^{i1}_t=0$, $f_t^{i1} = f'^{i}_{t}$,  $ c_t^{i1}= c'^{i}_{t}$, $r_t^{i12}= 0$, $h_{t}^{i1} = {\Omega}$ for every $i\in I$ and $t\in T$, and  $F^{12}_t = F'_t$ for every $t\in T$, considering {$\Omega$} to represent a very large number which is polynomially bounded by the input size such as $(\max_{i\in I, t \in T} f'^{i}_{t} + \max_{i\in I, t \in T} c'^{i}_{t} + \max_{i\in I, t \in T} h'^{i}_{t} + \max_{t \in T} F'_t)\times (NT+1)$.
For plant $p=2$, set $d^{i2}_t= d'{_{t}^{i}}$, $f_t^{i2} = c_t^{i2}= r_t^{i21} = {\Omega}$, $h_{t}^{i2} = h'^{i}_{t}$ for every $i\in I$ and $t\in T$, and $F^{21}_t = {\Omega}$ for every $t\in T$.
We now show that the JRP instance has a solution with value at most $K'$ iff the MIU2PLS instance has a solution with value at most $K'$. Considering a feasible solution $(\hat{y}',\hat{Y}',\hat{x}',\hat{s}')$ for the JRP instance, we build a solution $(\hat{y},\hat{x},\hat{s},\hat{w})$ for the MIU2PLS instance as follows. For every item $i\in I$ and every period $t\in T$, set 
$\hat{y}^{i1}_t = \hat{y}'{^{i}_{t}}$, 
$\hat{x}^{i1}_t = \hat{x}'{^{i}_{t}}$, 
$\hat{y}^{i2}_t = 0$, 
$\hat{x}^{i2}_t = 0$,
$\hat{s}^{i1}_t = 0$,
$\hat{s}^{i2}_t = \hat{s}'{^{i}_{t}}$,
$\hat{w}^{i12}_t = \hat{x}'{^{i}_{t}}$, 
$\hat{w}^{i21}_t = 0$. Additionally, for every period $t\in T$, set $\hat{Y}^{12}_t = \hat{y}'_t$ and $\hat{Y}^{21}_t = 0$.
The cost of this solution is 
\begin{flalign*}
\hat{z}_{MIU2PLS} = & \sum_{i \in I} \sum_{p \in P} \sum_{t \in T} \left( c_t^{ip} \hat{x}_{t}^{ip} + f_t^{ip} \hat{y}_{t}^{ip} + h_{t}^{ip} \hat{s}_{t}^{ip} + \sum_{\substack{l \in P, \\l \neq p}} r_t^{ipl} \hat{w}_{t}^{ipl} \right) + \sum_{p \in P}\sum_{\substack{l \in P, \\l \neq p}}\sum_{t \in T} F^{pl}_t \hat{Y}^{pl}_t\\
= & \sum_{i \in I} \sum_{t \in T} \left( c_t^{i1} \hat{x}_{t}^{i1} + f_t^{i1} \hat{y}_{t}^{i1} + h_{t}^{i2} \hat{s}_{t}^{i2} \right) + \sum_{t \in T} F^{12}_t \hat{Y}^{12}_t\\
= & \sum_{i \in I} \sum_{t \in T} \left( c'^{i}_{t} \hat{x}'{_{t}^{i}} + f'{_{t}^{i}} \hat{y}'{_{t}^{i}} + h'{_{t}^{i}} \hat{s}'{_{t}^{i}} \right) + \sum_{t \in T} F'_t \hat{Y}'_t\\
= & K'.
\end{flalign*}
Now, considering a feasible solution $(\hat{y},\hat{Y},\hat{x},\hat{s},\hat{w})$ for the MIU2PLS instance with finite objective value $K$, we build a solution $(\hat{y}',\hat{Y}',\hat{x}',\hat{s}')$ for the JRP instance as follows. For every item $i\in I$ and period $t\in T$, let $\hat{y}'{^{i}_{t}} = \hat{y}^{i1}_t$, $\hat{x}'{^{i}_{t}} = \hat{x}^{i1}_t$, $\hat{s}'{^{i}_{t}} = \hat{s}^{i2}_t$. Furthermore, for each period $t\in T$, let $\hat{y}'_t = \hat{Y}^{12}_t$. The cost of such solution is
\begin{flalign*}
\hat{z}_{JRP} = & \sum_{i \in I} \sum_{t \in T} \left( c'^{i}_{t} \hat{x}'{_{t}^{i}} + f'{_{t}^{i}} \hat{y}'{_{t}^{i}} + h'{_{t}^{i}} \hat{s}'{_{t}^{i}} \right) + \sum_{t \in T} F'_t \hat{Y}'_t\\
= & \sum_{i \in I} \sum_{t \in T} \left( c_t^{i1} \hat{x}_{t}^{i1} + f_t^{i1} \hat{y}_{t}^{i1} + h_{t}^{i2} \hat{s}_{t}^{i2}  \right) + \sum_{t \in T} F^{12}_t \hat{Y}^{12}_t\\
= & K.
\end{flalign*}
Therefore, the result holds.
\end{proof}

The reduction shown in Theorem~\ref{th:miu2pnphard} implies the following corollary.
\begin{corollary}\label{cor:twoechelon}
The two-echelon multi-item lot-sizing problem with joint setup costs on transportation is NP-hard.
\end{corollary}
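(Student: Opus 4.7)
The plan is to observe that the instance constructed in the reduction of Theorem~\ref{th:miu2pnphard} is itself already an instance of the two-echelon multi-item lot-sizing problem with joint setup costs on transportation, so essentially no new work is required: the same JRP $\propto$ MIU2PLS reduction proves NP-hardness of the two-echelon problem. Concretely, I would identify plant $p=1$ with the upper echelon (a production facility) and plant $p=2$ with the lower echelon (a warehouse/retailer), with items flowing only from echelon 1 to echelon 2, demands arising only at echelon 2, and the joint transportation setup cost per period corresponding exactly to $F^{12}_t$.

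First, I would spell out this identification formally: in a two-echelon system with joint setup costs on transportation, the decision variables are per-item production at echelon 1, per-item end-of-period inventories at each echelon, per-item transportation quantities from echelon 1 to echelon 2, per-item setup indicators for production and for transportation, and a single joint transportation setup indicator per period. Each of these has a direct counterpart among the MIU2PLS variables restricted to $p=1$, $p=2$, and the ordered pair $(1,2)$; the joint transportation setup corresponds to $Y^{12}_t$.

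Second, I would verify that the instance produced in the proof of Theorem~\ref{th:miu2pnphard} respects the two-echelon structure automatically. Because $f^{i2}_t = c^{i2}_t = \Omega$, no item is ever produced at plant $2$ in a finite-cost solution; because $r^{i21}_t = \Omega$ and $F^{21}_t = \Omega$, no item is ever transferred back from plant $2$ to plant $1$; and because $h^{i1}_t = \Omega$, no inventory is held at plant $1$. Hence every finite-cost solution to the constructed MIU2PLS instance lies inside the two-echelon feasible region, and the objectives coincide on this region.

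The only subtlety, which I expect to be the main (and still minor) obstacle, is to argue cleanly that the $\Omega$-penalty construction does not change the correspondence of optima when we restrict attention to two-echelon solutions: since $\Omega$ is chosen to strictly exceed the total cost of any reasonable plan by a polynomial factor, a solution with finite value in the two-echelon instance has the same value as the corresponding MIU2PLS solution, and conversely any MIU2PLS solution of value at most $K'$ must avoid all $\Omega$-penalized actions and therefore is already a valid two-echelon solution. Combining this with the equivalence established in Theorem~\ref{th:miu2pnphard} yields a polynomial reduction JRP $\propto$ two-echelon multi-item lot-sizing with joint setup costs on transportation, and the corollary follows.
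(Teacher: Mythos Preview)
Your proposal is correct and follows exactly the paper's approach: the paper states the corollary with no separate proof, simply noting that ``the reduction shown in Theorem~\ref{th:miu2pnphard} implies the following corollary.'' Your argument spells out precisely why---the constructed MIU2PLS instance is already a two-echelon instance (production only at plant~1, demand only at plant~2, transfers only $1\to 2$, joint setup $F^{12}_t$)---and in fact provides more detail than the paper itself.
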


We remark that Corollary~\ref{cor:twoechelon} complements the result which shows that the two-echelon multi-item lot-sizing problem with joint setup costs on production is NP-hard, as it was shown to be equivalent to the one-warehouse multi-retailer problem in~\citeA{CunMel16}.

\section{Concluding remarks}
\label{sec:finalcomments}

In this paper, we investigated the computational complexity of uncapacitated multi-plant lot-sizing problems which present simple structure and yet contain fundamental characteristics of practical supply chains. 
We have shown that the single-item uncapacitated multi-plant lot-sizing problem with a single period is NP-hard. Furthermore, we have proved that the multi-item uncapacitated two-plant lot-sizing problem with fixed transfer costs is NP-hard. As a direct consequence of the presented proofs, we have also demonstrated that the two-echelon multi-item lot-sizing with joint setup costs on transportation is NP-hard.


\vspace{0.8cm}

{
\noindent \small 
\textbf{Acknowledgments:}
Work of Rafael A. Melo was supported by the State of Bahia Research Foundation (FAPESB); and the Brazilian National Council for Scientific and Technological Development (CNPq).
}

\bibliography{multiplantlotsizingcomplexity}

\bibliographystyle{apacite}

\end{document}